\newcommand{\Marginpar}[1]{\marginpar{\tiny{#1}}}
\newcommand{\Note}[1]{{\par\noindent\hrulefill\par\tiny{#1}\par\noindent\hrulefill\par}}
\newcommand{\Detail}[1]{{#1}}
\renewcommand{\Marginpar}[1]{}
\renewcommand{\Note}[1]{}
\renewcommand{\Detail}[1]{}
\newcommand\ddfrac[2]{\frac{\displaystyle #1}{\displaystyle #2}}
\newtheorem{thm}{Theorem}[section]
\newtheorem{prop}[thm]{Proposition}
\newtheorem{lem}[thm]{Lemma}
\newtheorem{cor}[thm]{Corollary}
\theoremstyle{definition}
\newtheorem{defn}[thm]{Definition}
\newtheorem{rem}[thm]{Remark}
\renewcommand{\[}{\begin{equation*}}
\renewcommand{\]}{\end{equation*}}
\begin{document}
\parskip1mm

\title[Chern--Yamabe problem]{On the Chern--Yamabe flow}

\author{Mehdi Lejmi}
\address{Department of Mathematics, Bronx Community College of CUNY, Bronx, NY 10453, USA.}
\email{mehdi.lejmi@bcc.cuny.edu}

\author{Ali Maalaoui}
\address{Department of mathematics and natural sciences, American University of Ras Al Khaimah, PO Box 10021, Ras Al Khaimah, UAE.}
\email{ali.maalaoui@aurak.ac.ae}

\begin{abstract}
On a closed balanced manifold, we show that if the Chern scalar curvature
is small enough in a certain Sobolev norm then a slightly modified version of the Chern--Yamabe flow~\cite{Angella:2015aa} converges to a solution of the Chern--Yamabe problem.
We also prove that if the Chern scalar curvature, on closed almost-Hermitian manifolds, is close enough to a constant function in a H{\"o}lder norm then the Chern--Yamabe problem has a solution
for generic values of the fundamental constant.
\end{abstract}
\maketitle


\section{Introduction}

An almost-Hermitian manifold $(M,J,g)$ is equipped with a pair of an almost-complex structure $J$
and a Riemannian metric $g$ such that $g(\cdot,\cdot)=g(J\cdot,J\cdot).$ If $J$ is integrable then $(J,g)$ is a Hermitian structure.
On an almost-Hermitian manifold $(M,J,g)$, there exists a natural connection called the Chern connection~\cite{MR0066733,MR0165458,MR1456265}. From it one can derive the Chern scalar curvature.
In~\cite{Angella:2015aa}, Angella, Calamai and Spotti initiated the study of an analogue of the Yamabe problem on closed Hermitian manifolds. Namely, they
examined the existence of constant Chern scalar curvature metrics in a conformal class. The problem is then extended to the almost-Hermitian case in~\cite{Lejmi:2017aa} and it turns out that the conformal change
of the Chern scalar curvature is the same as in the integrable case.
The existence of metrics of zero Chern scalar curvature on symplectic Calabi--Yau $4$-manifolds is among the motivations to study such problem~\cite[Question 7.16]{MR2743450}.
For example, on the Kodaira--Thurston manifold, which is a symplectic Calabi--Yau $4$-manifold, there is no metric of zero Riemannian scalar curvature~\cite{MR1306021,MR1306023}
(see also~\cite{MR2743450}), even though it is well-known from the solution to the Yamabe problem~\cite{MR0125546,MR0240748,MR1636569,MR788292} that there exists a metric of constant Riemannian scalar
curvature in any conformal class. 
Nevertheless, there exists on the Kodaira--Thurston manifold metrics with vanishing Chern scalar curvature~\cite{MR2988734} (see also~\cite{MR2795448,MR2917134}).

Now, it turns out that on a closed almost-Hermitian manifold $(M,J,g)$, if the fundamental constant $C(M,J,[g])$~\cite{MR1712115,MR779217,MR0486672,MR0425191} (see Definition~\ref{constant})
is negative or zero then there exists a metric in the conformal class of $g$ with Chern scalar curvature equal to the fundamental constant~\cite{MR0301680,MR779217,Angella:2015aa,Lejmi:2017aa}.
In the negative case, first Berger~\cite{MR0301680} used a variational approach to prescribe the Chern scalar curvature in the conformal class of $g$ when $(M,J,g)$ is a closed K\"ahler manifold.
Angella, Calamai and Spotti~\cite{Angella:2015aa} used the continuity method to solve the Chern--Yamabe problem when $C(M,J,[g])<0$ on any Hermitian manifold.
The case $C(M,J,[g])=0$ is quite straightforward to solve using the existence of Gauduchon metrics~\cite[Corollary 1.9]{MR779217} (see also~~\cite{Angella:2015aa,Lejmi:2017aa}).
As expected, the positive case is more complicated because the PDE looses its nice analytic properties.
If $M$ is a $2$-dimensional compact manifold, the Chern scalar curvature is the Gaussian curvature. The problem is then variational and well-defined on the sobolev space $H^{1,2}(M)$ and the PDE appearing as the Euler-Lagrange equation of the energy functional is well-understood in that case (see for instance~\cite{MR681859,MR0295261,MR2001443,MR2483132} and the references therein).
In higher dimension, a flow approach is suggested in~\cite{Angella:2015aa} to tackle the problem.

In the present paper, after the preliminaries in~\S2, we prove in~\S3 that if the Chern scalar curvature, on a closed almost-Hermitian manifold, is close enough to a constant function in
the H{\"o}lder norm $C^{0,\alpha}(M)$ then
the Chern--Yamabe problem has a solution for generic values of the fundamental constant.
\begin{thm}\label{thm_3}
Let $(M,J,g)$ be a closed almost-Hermitian manifold of dimension $2n$ and $s^C$ its Chern scalar curvature.
Assume that ${C(M,J,[g])} \notin\sigma(\Delta+g(\theta,d\cdot))$, where $\sigma(\Delta + g(\theta,d\cdot))$ is the spectrum of the operator $\Delta+ g(\theta,d\cdot)$ (here
$\Delta$ stands for the Riemannian Laplacian of $g$
and $\theta$ for the Lee form of $(J,g)$).
Then there exists $\epsilon_{0}>0$ such that if $$\|s^C-C(M,J,[g])\|_{C^{0,\alpha}}<\epsilon_{0}$$ then there exists a conformal metric $\tilde{g}=e^{\frac{2u}{n}}g$ of constant Chern scalar curvature $C(M,J,[g])$, where the function $u$ is normalized by $\int_Me^{\frac{2u}{n}}\,\operatorname{vol}_g=1$.
\end{thm}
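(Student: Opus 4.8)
The plan is to recast the Chern--Yamabe problem as a semilinear elliptic equation, to notice that the answer is trivial when $s^{C}$ is exactly the constant $C(M,J,[g])$, and then to perturb off that trivial solution by the implicit function theorem in H\"older spaces, with $s^{C}$ playing the role of the parameter. The spectral hypothesis is precisely what makes the relevant linearised operator invertible, and its failure on a discrete set is the reason the conclusion is only claimed for generic values of the fundamental constant.

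Since the Chern--Yamabe problem depends only on $[g]$, I would first replace $g$ by the Gauduchon representative of $[g]$, rescaled to unit volume; then $\int_{M}(\Delta+g(\theta,d\cdot))f\,\operatorname{vol}_{g}=0$ for every $f$, and $\int_{M}s^{C}\operatorname{vol}_{g}=C:=C(M,J,[g])$ by Definition~\ref{constant}. By the conformal transformation law recalled in~\S2, writing $\Delta_{\mathrm{Ch}}:=\Delta+g(\theta,d\cdot)$ for the Chern Laplacian, the metric $\tilde g=e^{\frac{2u}{n}}g$ has constant Chern scalar curvature $C$ exactly when
\[
\tfrac{2}{n}\,\Delta_{\mathrm{Ch}}\,u+s^{C}=C\,e^{\frac{2u}{n}} ;
\]
integrating this against $\operatorname{vol}_{g}$ shows that any solution automatically satisfies the normalisation $\int_{M}e^{\frac{2u}{n}}\operatorname{vol}_{g}=1$, and when $s^{C}\equiv C$ the solution is $u\equiv0$. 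To solve the equation near $u=0$ when $s^{C}$ is near the constant $C$, I would introduce
\[
\mathcal F\colon C^{2,\alpha}(M)\times C^{0,\alpha}(M)\to C^{0,\alpha}(M),\qquad
\mathcal F(u,\rho)=\tfrac{2}{n}\Delta_{\mathrm{Ch}}u+\rho-C\,e^{\frac{2u}{n}} ,
\]
which is smooth because $v\mapsto e^{\frac{2v}{n}}$ is a smooth Nemytskii operator on $C^{0,\alpha}(M)$ and $\Delta_{\mathrm{Ch}}\colon C^{2,\alpha}\to C^{0,\alpha}$ is bounded, and which satisfies $\mathcal F(0,C)=0$ and $D_{u}\mathcal F(0,C)=\tfrac{2}{n}\bigl(\Delta+g(\theta,d\cdot)-C\,\id\bigr)$.

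The heart of the matter --- and the step I expect to be the main obstacle --- is to show that $L:=\Delta+g(\theta,d\cdot)-C\,\id\colon C^{2,\alpha}(M)\to C^{0,\alpha}(M)$ is an isomorphism under the hypothesis $C\notin\sigma(\Delta+g(\theta,d\cdot))$. I would argue through the Fredholm index: $L$ is a second-order linear elliptic operator with the same principal symbol as $\Delta$, hence Fredholm by Schauder theory, and the homotopy $\Delta+t\,g(\theta,d\cdot)-tC\,\id$, $t\in[0,1]$, through elliptic operators gives $\operatorname{ind}L=\operatorname{ind}\Delta=0$ (on functions $\Delta$ has one-dimensional kernel and cokernel); the kernel of $L$ consists of the $u$ with $(\Delta+g(\theta,d\cdot))u=Cu$, i.e.\ of eigenfunctions of the Chern Laplacian for the value $C$, so it is trivial precisely because $C\notin\sigma(\Delta+g(\theta,d\cdot))$; being Fredholm of index $0$ and injective, $L$ is an isomorphism. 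The delicate feature is that $\Delta+g(\theta,d\cdot)$ is \emph{not} self-adjoint --- the Lee-form term is skew-symmetric modulo a zeroth-order term --- so the spectral theorem is unavailable and one must pass through the index; nevertheless its resolvent is compact where it exists, so $\sigma(\Delta+g(\theta,d\cdot))$ is a discrete subset of $\mathbb C$. One should also observe that $0$ always belongs to this spectrum (with the constants as eigenfunctions), so the hypothesis forces $C\neq0$; and for a Gauduchon metric a short integration by parts shows that all eigenvalues have non-negative real part, with $0$ the only purely imaginary one, so the condition is automatic when $C<0$ and a genuine restriction only when $C>0$.

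With $L$ invertible, the implicit function theorem applied to $\mathcal F$ at $(0,C)$ yields $\epsilon_{0}>0$ and a neighbourhood $\mathcal U$ of $0$ in $C^{2,\alpha}(M)$ such that, for every $\rho\in C^{0,\alpha}(M)$ with $\|\rho-C\|_{C^{0,\alpha}}<\epsilon_{0}$, there is a unique $u=u(\rho)\in\mathcal U$ with $\mathcal F(u,\rho)=0$. Taking $\rho=s^{C}$ --- which lies in this ball by hypothesis --- gives a solution of the Chern--Yamabe equation; since $s^{C}$ is smooth, elliptic bootstrapping makes $u\in C^{\infty}(M)$, the normalisation holds by the remark above, and $\tilde g=e^{\frac{2u}{n}}g$ has constant Chern scalar curvature $C(M,J,[g])$, as required. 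Finally, since $\sigma(\Delta+g(\theta,d\cdot))$ is discrete, the hypothesis $C(M,J,[g])\notin\sigma(\Delta+g(\theta,d\cdot))$ holds for generic conformal classes, which is the sense in which the result applies to generic values of the fundamental constant.
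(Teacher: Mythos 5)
Your proposal is correct and follows essentially the same route as the paper: both perturb off the trivial solution $u\equiv 0$ of the equation with $s^C$ replaced by the constant $C(M,J,[g])$, using invertibility of the linearised operator guaranteed by the spectral hypothesis --- the paper packages this as an explicit Banach contraction, you as the implicit function theorem (plus a Fredholm-index justification of invertibility that the paper leaves implicit). The only discrepancy is a normalisation: the conformal change formula (\ref{conf_change}) carries no $\tfrac{2}{n}$ in front of $\Delta$, so the linearisation is really $\Delta+g(\theta,d\cdot)-\tfrac{2}{n}C$ and the natural hypothesis would be $\tfrac{2}{n}C(M,J,[g])\notin\sigma(\Delta+g(\theta,d\cdot))$; your extra factor of $\tfrac{2}{n}$ happens to reproduce the condition exactly as stated in the theorem, an inconsistency already present in the paper itself.
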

In~\S4, we first study the Chern--Yamabe flow defined in~\cite{Angella:2015aa},
when the fundamental constant is negative. We prove that the flow converges to a solution of the Chern--Yamabe problem (see Theorem~\ref{thm_1}) and so we recover Angella--Calamai--Spotti's result in the negative case.
Then,
we restrict ourselves to a closed balanced manifold $(M,J,g)$ of dimension $2n$ and we consider a slightly modified version of the Chern--Yamabe flow, namely
\begin{equation}\label{flow_mod}
\left\{\begin{array}{ll}
\ddfrac{\partial u}{\partial t}&=-\Delta u -s^C+\lambda(t)e^{\frac{2u}{n}},\\
\\
u(x,0)&=0,
\end{array}
\right.
\end{equation}
where $\lambda(t)=\frac{\int_M s^C\operatorname{vol}_g}{\int_{M}e^{\frac{2u}{n}}\operatorname{vol}_g}$.
We prove then that 
if the Chern scalar curvature is small enough in the Sobolev norm $H^{k,2}(M)$ (with $k>n$) then the flow~(\ref{flow_mod}) converges to a solution of the Chern--Yamabe problem.
\begin{thm}\label{thm_2}
Let $(M,J,g)$ be a closed balanced manifold of dimension $2n$, $s^C$ its Chern scalar curvature and $k>n$.
There exists $\delta>0$ such that if $\|s^C\|_{H^{k,2}}<\delta$ then the solution $u(x,t)$ of the flow $(\ref{flow_mod})$ converges to a solution of the the equation $$\Delta u+s^C=\frac{\int_M s^C\operatorname{vol}_g}{\int_{M}e^{\frac{2u}{n}}\operatorname{vol}_g}e^{\frac{2u}{n}},$$
in the $H^{k,2}$ sense.
\end{thm}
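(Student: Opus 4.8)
The plan is to treat the flow~(\ref{flow_mod}) as a perturbation of the trivial flow near the constant $0$, exploiting smallness of $s^C$ in $H^{k,2}$ with $k>n$ so that Sobolev embedding $H^{k,2}\hookrightarrow C^0$ (indeed $C^{0,\alpha}$) controls the exponential nonlinearity $e^{2u/n}$. First I would set up short-time existence: the operator $u\mapsto -\Delta u - s^C + \lambda(t)e^{2u/n}$ is a quasilinear (in fact semilinear) parabolic operator, and since on a balanced manifold the Lee form vanishes so that the Chern Laplacian coincides with the Riemannian Laplacian $\Delta$, standard parabolic theory on closed manifolds (linearize, apply $L^p$ or Schauder estimates, contraction mapping in a suitable parabolic Hölder/Sobolev space) gives a unique solution on a maximal interval $[0,T_{\max})$, smooth for $t>0$ and with $u(\cdot,t)\to 0$ in $H^{k,2}$ as $t\to 0$. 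The choice $\lambda(t)=\int_M s^C\,\vol_g/\int_M e^{2u/n}\,\vol_g$ is designed so that $\frac{d}{dt}\int_M e^{2u/n}\,\vol_g = \frac{2}{n}\int_M e^{2u/n}\,\partial_t u\,\vol_g$; one checks by integrating the equation against $e^{2u/n}$ (and discarding the $\int \Delta u\, e^{2u/n}$ term via integration by parts, which is where balanced/$d\theta$-free is used) that the total volume-type quantity and the "mean of $s^C$" are coupled exactly so that a stationary point of the flow solves the displayed equation — this is the same bookkeeping as in the Angella--Calamai--Spotti normalization, and I would record it as a preliminary lemma.

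The heart of the argument is an a priori estimate showing that if $\|s^C\|_{H^{k,2}}<\delta$ then $\|u(\cdot,t)\|_{H^{k,2}}$ stays small, uniformly in $t$, so that $T_{\max}=\infty$. The strategy is a continuity/bootstrap argument: suppose $\|u(\cdot,t)\|_{H^{k,2}}\le 2C\delta$ on $[0,T]$ for the constant $C$ coming from the linear estimate; then $\|u\|_{C^0}\le c\delta$ is small, so $e^{2u/n}$ and hence $\lambda(t)$ are controlled, $\lambda(t)=\int_M s^C\vol_g/\int_M e^{2u/n}\vol_g = O(\delta)$. Testing the equation with $\Delta^k u$ (or using the parabolic smoothing estimate for $\partial_t u + \Delta u = F(u)$ with $F(u)=-s^C+\lambda e^{2u/n}$, $\|F(u)\|_{H^{k,2}}\le \|s^C\|_{H^{k,2}} + |\lambda|\,\|e^{2u/n}\|_{H^{k,2}} \le C'\delta$ using the Banach-algebra property of $H^{k,2}$ for $k>n$ together with smallness of $\|u\|_{H^{k,2}}$ to tame the composition $e^{2u/n}$) yields an energy inequality of the form $\frac{d}{dt}\|u\|_{H^{k,2}}^2 + c\|u\|_{H^{k+1,2}}^2 \le C''\delta\,\|u\|_{H^{k,2}} + C''\delta^2$. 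Together with $u(\cdot,0)=0$, a Grönwall/bootstrap argument improves the bound to $\|u(\cdot,t)\|_{H^{k,2}}\le C\delta$ strictly, closing the continuity argument and giving global existence with a uniform-in-time small bound.

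For convergence as $t\to\infty$, I would show $\|\partial_t u(\cdot,t)\|_{L^2}\to 0$ and extract a limit. Concretely, compute $\frac{d}{dt}\int_M |\partial_t u|^2\,\vol_g$ and show, using the uniform $H^{k,2}$ bound and the smallness of $\delta$, that $\int_0^\infty\|\partial_t u\|_{L^2}^2\,dt<\infty$ and that $\|\partial_t u\|_{L^2}$ is itself decaying (the linearized operator $-\Delta + \tfrac{2}{n}\lambda e^{2u/n} - (\text{nonlocal term from }\lambda')$ has, for small $\delta$, a spectral gap inherited from $-\Delta$ on the complement of constants, once one accounts for the conserved normalization); alternatively, a Łojasiewicz--Simon inequality for the associated energy functional gives exponential convergence. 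The uniform $H^{k,2}$ bound provides compactness, so $u(\cdot,t_j)\to u_\infty$ in $H^{k',2}$ for every $k'<k$, and $u_\infty$ is a weak — hence, by elliptic regularity, strong $H^{k,2}$ — solution of $\Delta u_\infty + s^C = \lambda_\infty e^{2u_\infty/n}$ with $\lambda_\infty=\int_M s^C\vol_g/\int_M e^{2u_\infty/n}\vol_g$. Finally I would upgrade $H^{k',2}$ convergence to $H^{k,2}$ convergence of the whole flow (not just a subsequence) using the gradient-flow/decay structure.

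The main obstacle I anticipate is the a priori $H^{k,2}$ estimate: controlling the composition $e^{2u/n}$ and its high derivatives in $H^{k,2}$ requires the Moser-type / Banach-algebra estimates for $k>n$ and careful tracking that all constants depend only on $(M,J,g)$ and not on $T$, and making the bootstrap genuinely close (improving $2C\delta$ to $C\delta$) rather than merely propagate. A secondary subtlety is the nonlocal term $\lambda(t)$ and its time derivative $\lambda'(t)$, which couples to $\partial_t u$ through the moving denominator $\int_M e^{2u/n}\vol_g$; one must check this term is a genuinely lower-order, $O(\delta)$ contribution and does not destroy the dissipative structure — again smallness of $\delta$ together with the balanced condition (to kill the boundary term when integrating $\Delta u$ against $e^{2u/n}$) is what saves the day.
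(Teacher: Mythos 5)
Your proposal follows essentially the same route as the paper: global existence, a uniform-in-time small $H^{k,2}$ bound via a continuity argument exploiting the spectral gap of $-\Delta$ on mean-zero functions (the paper runs this through the Duhamel formula and the exponentially decaying heat semigroup on $H^{k}_{0}$, where you propose an equivalent energy estimate), followed by exponential decay of $F(t)=\tfrac12\|\partial_t u\|_{L^2}^2$ via the Poincar\'e inequality and a bootstrap to $H^{k,2}$ convergence. One correction to a non--load-bearing remark: the conserved quantity is $\int_M u\,\operatorname{vol}_g=0$, obtained by integrating the equation against $1$ (so $\int_M\Delta u\,\operatorname{vol}_g=0$ and the remaining two terms cancel by the definition of $\lambda(t)$); your claim that $\int_M \Delta u\, e^{2u/n}\,\operatorname{vol}_g$ is discarded by integration by parts is false, since that term equals $\tfrac{2}{n}\int_M e^{2u/n}|\nabla u|^2\,\operatorname{vol}_g$.
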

Finally in~\S5, we examine the case of a $2n$-dimensional closed almost-Hermitian manifold $(M,J,g)$ equipped with a free action of $(2n-2)$-dimensional subgroup $G$
of the isometry group preserving the pair $(J,g)$. We show then the existence of a $G$-invariant metric of constant Chern scalar curvature in the conformal class of $g$ (see Proposition~\ref{prop_1}).

\subsection*{Acknowledgments}
The first named author is supported in part by a PSC-CUNY Award $\#$ 60053-00 48, jointly funded by The Professional Staff Congress and The City University of New York.

\section{Preliminaries}\label{sec:Preliminaries}

An almost-Hermitian structure on a real manifold $M$ of dimension $2n$ is given by a pair $(J,g)$ of an almost-complex structure $J$
and a Riemannian metric $g$ satisfying
\[
g(\cdot,\cdot)=g(J\cdot,J\cdot).
\]
The almost-Hermitian structure induces the fundamental $2$-form $F(\cdot,\cdot):=g(J\cdot,\cdot).$ In general, the $2$-form $F$ is not closed and we have
\[
dF=\left(dF\right)_0+\frac{1}{n-1}\theta\wedge F,
\]
where $d$ is the exterior derivative, $\left(dF\right)_0$ is the trace-free part of $dF$ and $\theta$ is a $1$-form called the Lee form.
The metric $g$ is Gauduchon if $\delta^g\theta=0,$ where $\delta^g$ is the codifferential defined as the adjoint of
the exterior derivative with the respect to the global inner product induced by the metric $g.$ Gauduchon proved in~\cite{MR0470920}
that any confomal class $[g]=\{e^{\frac{2u}{n}}g\,|\, u\colon M\longrightarrow \mathbb{R}\}$ contains a unique Gauduchon metric up to a constant multiple.
If $dF=0$ then the almost-Hermitian structure is called almost-K\"ahler. More generally, if $\theta=0$, then the metric $g$ is said to be balanced.  

On the other hand, if the almost-complex structure $J$ is integrable then $(J,g)$ is actually a Hermtian structure and $(M,J)$ is a complex manifold.
A Hermitian structure  $(J,g)$ is K\"ahler if $dF=0.$

On an almost-Hermitian manifold $(M,J,g)$, the almost-complex structure $J$ is parallel with respect to the Levi-Civita connection $D^g$ if
and only if $(J,g)$ is K\"ahler. We consider then the Chern connection $\nabla$~\cite{MR0066733,MR0165458,MR1456265} defined as the unique connection satisfying $\nabla J=\nabla g=0$ and $T^\nabla_{JX,JY}=-T^{\nabla}_{X,Y},$ 
where $T^\nabla_{X,Y}=\nabla_XY-\nabla_YX-[X,Y]$ stands for the torsion of $\nabla$ and $X,Y$ are vector fields on $M$ (for more details see~\cite{MR1456265}).
We denote by $R^\nabla_{X,Y}=[\nabla_X,\nabla_Y]-\nabla_{[X,Y]}$ the curvature tensor of $\nabla$ and by $\rho_{X,Y}=-\Lambda\left(R^\nabla_{X,Y}\right)$
the first (or Hermitian) Ricci form, defined as the trace of $R^\nabla$ (here $\Lambda$ stands for the contraction by the fundamental form $F$).
The $2$-form $\rho$ is actually a representative of the first Chern class $2\pi c_1(TM,J)$ of $M.$ 
The Chern scalar curvature
 \[
 s^C=\Lambda\left(\rho\right)
 \]
 is defined as the trace of $\rho.$
 
 \subsection{Conformal variation}
 On the $2n$-dimensional almost-Hermitian manifold $(M,J,g)$, we consider a conformal metric $\tilde{g}=e^{\frac{2u}{n}}g$, where $u$ is a smooth real-valued function on $M$. 
 Then $(J,\tilde{g})$ is an almost-Hermitian structure. We denote by $s^C$, respectively $\tilde{s}^C$,
 the Chern scalar curvature of $(J,g)$, respectively $(J,\tilde{g})$. We have then the following formula for the conformal change \cite{MR742896,Lejmi:2017aa}
 \begin{equation}\label{conf_change}
\Delta u+g(\theta,du)+s^C= \tilde{s}^Ce^{\frac{2u}{n}},
 \end{equation}
 where $\Delta$ is the Riemannian Laplacian with respect to the metric $g$ and $\theta$ is the Lee form of $(J,g)$.
 
 \begin{defn}\label{constant}
 Let $(M,J,g)$ be a $2n$-dimensional closed almost-Hermitian manifold and $g_0$ be the unique Gauduchon metric in $[g]$ with total volume equals to $1$.
 Then the fundamental constant~\cite{MR1712115,MR779217,MR0486672,MR0425191} is (up to a factor 2)
 \[
 C(M,J,[g])=\int_Ms_0^C\frac{F_0^n}{n!},
 \]
 \end{defn}
 where $s_0^C$ is the Chern scalar curvature of $(J,g_0)$ and $F_0$ is the fundamental form induced by $(J,g_0).$
 We recall the the following observation of~\cite[Corollary 1.9]{MR779217} (see also~\cite{Angella:2015aa,Lejmi:2017aa})
\begin{prop}~\cite{MR779217}\label{sign_fixed}
 Let $(M,J,g)$ be a closed almost-Hermitian manifold. Then there exists a conformal metric $\tilde{g}\in[g]$
 whose Chern scalar curvature has the same sign as $C(M,J,[g])$ at every point.
 \end{prop}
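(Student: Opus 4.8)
The plan is to pass to the Gauduchon representative of the conformal class and then solve a single \emph{linear} equation via the Fredholm alternative. First I would replace $g$ by the Gauduchon metric $g_{0}\in[g]$ of total volume $1$, which exists by Gauduchon's theorem~\cite{MR0470920}; since $[g_{0}]=[g]$ it suffices to produce the desired conformal metric inside $[g_{0}]$, and with this normalization the definition of the fundamental constant reads $C:=C(M,J,[g])=\int_{M}s_{0}^{C}\,\vol_{g_{0}}$, where $\vol_{g_{0}}=F_{0}^{n}/n!$, $s_{0}^{C}$ is the Chern scalar curvature of $(J,g_{0})$, and $\vol(M,g_{0})=1$.

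Next consider the second-order linear elliptic operator $L_{0}u:=\Delta_{g_{0}}u+g_{0}(\theta_{0},du)$ on $M$, where $\theta_{0}$ is the Lee form of $(J,g_{0})$; it has no zeroth-order term. Because $g_{0}$ is Gauduchon, $\delta^{g_{0}}\theta_{0}=0$, so integrating by parts one computes that the formal $L^{2}(\vol_{g_{0}})$-adjoint is $L_{0}^{\ast}v=\Delta_{g_{0}}v-g_{0}(\theta_{0},dv)$, which again has no zeroth-order term. Hence the strong maximum principle gives $\ker L_{0}=\ker L_{0}^{\ast}=\mathbb{R}$ (the constants), and since $L_{0}$ is Fredholm of index zero on the usual H{\"o}lder or Sobolev scales, its image is exactly $\{h:\int_{M}h\,\vol_{g_{0}}=0\}$.

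Then I would solve $L_{0}u=C-s_{0}^{C}$: the right-hand side is smooth and, by the definition of $C$ together with $\vol(M,g_{0})=1$, it has vanishing integral, so a solution $u$ exists and is smooth by elliptic regularity. Feeding $u$ into the conformal change formula~\eqref{conf_change} with base metric $g_{0}$, the metric $\tilde g=e^{2u/n}g_{0}\in[g]$ satisfies $\tilde s^{C}e^{2u/n}=L_{0}u+s_{0}^{C}=C$, that is $\tilde s^{C}=C\,e^{-2u/n}$; this is everywhere of the sign of $C$, and identically zero when $C=0$, so in fact $\tilde s^{C}$ has no zeros unless $C=0$, which slightly sharpens the statement.

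The only genuinely delicate point is the identification $\ker L_{0}^{\ast}=\mathbb{R}$, equivalently that the sole obstruction to solving $L_{0}u=h$ is $\int_{M}h\,\vol_{g_{0}}=0$. This is exactly where the Gauduchon condition $\delta^{g_{0}}\theta_{0}=0$ enters: it removes the zeroth-order term from $L_{0}^{\ast}$ and forces the one-dimensional cokernel direction to be the constant function, so that the cokernel pairing becomes integration against $\vol_{g_{0}}$, which is what makes $C$ appear on the right-hand side. For a non-Gauduchon representative one would instead have to carry along the positive solution $\psi$ of $L^{\ast}\psi=0$ and argue that $\psi\,\vol_{g}$ is proportional to $\vol_{g_{0}}$, which is less transparent; passing to $g_{0}$ at the outset circumvents this.
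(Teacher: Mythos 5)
Your argument is correct and is exactly the standard proof of this fact: the paper itself gives no proof but cites~\cite[Corollary~1.9]{MR779217}, and the argument there (and in~\cite{Angella:2015aa,Lejmi:2017aa}) is precisely your reduction to the Gauduchon representative, the observation that $\delta^{g_0}\theta_0=0$ makes the cokernel of $\Delta_{g_0}+g_0(\theta_0,d\cdot)$ the constants so that the linear equation $L_0u=C-s_0^C$ is solvable, and the conclusion $\tilde s^C=Ce^{-2u/n}$ via~\eqref{conf_change}. No gaps.
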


\subsection{Chern--Yamabe problem}
In~\cite{Angella:2015aa}, Angella, Calamai and Spotti initiated the study of the Chern--Yamabe problem on closed Hermitian manifolds.
Namely, they investigated the existence of metrics of constant Chern scalar curvature in a given conformal class.
The problem was generalized to the non-integrable case in~\cite{Lejmi:2017aa}. It turns out that the conformal change of the Chern scalar curvature is the same
as in the integrable case and it is given by Equation~(\ref{conf_change}).

When $C(M,J,[g])=0$, Balas~\cite[Corollary 1.9]{MR779217} proved the existence of a flat Chern scalar curvature metric in the conformal class $[g].$
When $C(M,J,[g])< 0$, Angella, Calamai and Spotti~\cite{Angella:2015aa} showed the existence of negative constant Chern scalar
curvature metric in $[g]$ using the continuity method. We also refer to the work of Berger~\cite{MR0301680} when the conformal $[g]$ contains a K\"ahler metric (or more generally a balanced metric).
As expected, the case $C(M,J,[g])> 0$ is the problematic one. On complex manifolds $(M,J)$, $C(M,J,[g])> 0$
implies by the Gauduchon plurigenera Theorem~\cite{MR0486672} that the Kodaira dimension is $-\infty.$
In~\cite{Angella:2015aa}, Angella, Calamai and Spotti gave examples
of positive constant Chern scalar curvature Hermitian non K\"ahler metrics. For instance, 
they deformed flat Chern scalar curvature metrics, using implicit function theorem, to obtain families of positive constant Chern scalar curvature metrics.
They also suggested a Chern--Yamable flow to attack the problem.
Finally, the first named author and Upmeier~\cite{Lejmi:2017aa} studied the existence of metrics of constant Chern scalar curvature
on some ruled manifolds.
 
\section{Small Oscillation case}
In this section, we prove that if the Chern scalar curvature is close enough to a constant function in the H\"older norm $C^{0,\alpha}(M)$ then
the Chern-Yamabe problem is solvable for generic values of the fundamental constant. On a closed Riemannian manifold $(M,g)$, we consider first the following equation 
\begin{equation}\label{eq1}
\Delta u+S=\tilde{S}e^{\frac{2u}{n}}, 
\end{equation}
where $\Delta$ is the Riemannian Laplacian with respect to $g$ and $S$ and $\tilde{S}$ are given functions in $C^{0,\alpha}(M)$.

\begin{lem}
There exists $\epsilon_{0}>0$ and a $G_{\delta}$-dense set $$\mathcal{A}\subset  B_{\epsilon_{0}}(S)=\{u\in C^{0,\alpha}(M)\,|\,\|S-u\|_{C^{0,\alpha}}<\varepsilon_0\},$$ such that if $\tilde{S}\in \mathcal{A}$, Equation (\ref{eq1}) has at least one solution.
\end{lem}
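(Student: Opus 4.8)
The plan is to recast Equation~(\ref{eq1}) as the search for a zero of a smooth map and then apply a parametrized/generic version of the implicit function theorem (a transversality argument in the spirit of Sard--Smale). Fix $\alpha\in(0,1)$ and consider the map
\[
\Phi\colon C^{2,\alpha}(M)\times C^{0,\alpha}(M)\longrightarrow C^{0,\alpha}(M),\qquad
\Phi(u,\tilde S)=\Delta u+S-\tilde S\,e^{\frac{2u}{n}}.
\]
Then $\tilde S$ admits a solution of~(\ref{eq1}) precisely when $0\in \Phi(\cdot,\tilde S)^{-1}$ is nonempty. The map $\Phi$ is $C^\infty$ in both variables (the nonlinearity is $\tilde S\mapsto \tilde S$ linear and $u\mapsto e^{2u/n}$ analytic on a Hölder space, which is a Banach algebra for $\alpha>0$), and $\Phi(0,S)=0$ when $S$ itself is constant --- but in general $\Phi(0,S)=S-S e^{0}=0$ always, so $(u,\tilde S)=(0,S)$ is a distinguished zero. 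The idea is to thicken this zero to a whole $G_\delta$-dense set of parameters $\tilde S$ for which a zero persists.

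The key steps, in order. First I would verify that the partial derivative $D_u\Phi(0,S)=\Delta-\frac{2}{n}S\cdot$ (multiplication), i.e. $v\mapsto \Delta v-\frac{2}{n}Sv$, is a Fredholm operator of index zero from $C^{2,\alpha}$ to $C^{0,\alpha}$; this is standard elliptic theory. Second, I would observe that the \emph{full} linearization $D\Phi(0,S)(v,\sigma)=\Delta v-\frac{2}{n}Sv-\sigma$ is surjective, since the $\sigma$-direction alone already hits all of $C^{0,\alpha}(M)$. Hence $0$ is a regular value of $\Phi$ near $(0,S)$, and $\mathcal Z:=\Phi^{-1}(0)$ is a Banach submanifold of $C^{2,\alpha}(M)\times C^{0,\alpha}(M)$ in a neighborhood of $(0,S)$. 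Third, I would apply the Sard--Smale theorem to the projection $\pi\colon \mathcal Z\to C^{0,\alpha}(M)$, $(u,\tilde S)\mapsto \tilde S$: since $D_u\Phi$ is Fredholm of index zero, $\pi$ is a Fredholm map of index zero, so its set of regular values is residual (a countable intersection of open dense sets), hence $G_\delta$-dense. For $\tilde S$ a regular value of $\pi$ lying in a small ball $B_{\epsilon_0}(S)$ --- with $\epsilon_0$ chosen small enough that the local manifold structure and the properness needed for Sard--Smale hold on the relevant piece of $\mathcal Z$ --- the fiber $\pi^{-1}(\tilde S)$ is a zero-dimensional manifold, in particular nonempty near the base point because $\pi$ restricted to a neighborhood of $(0,S)$ in $\mathcal Z$ is a local submersion onto a full neighborhood of $S$. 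Finally, elliptic regularity bootstraps any $C^{2,\alpha}$ solution with $\tilde S\in C^{0,\alpha}$ to the regularity needed downstream, and one records $\mathcal A$ as that $G_\delta$-dense set.

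The main obstacle, and the step deserving the most care, is the properness input to Sard--Smale. The abstract Sard--Smale theorem gives a residual set of regular values for a $C^k$ Fredholm map with $k$ large, but to conclude that regular values actually have \emph{nonempty} preimage --- not merely that regular values are generic --- one needs either a local surjectivity statement (which the submersion at $(0,S)$ provides, giving solutions for all $\tilde S$ in a genuine neighborhood, so nonemptiness is automatic and one only invokes genericity to get \emph{transverse/nondegenerate} solutions) or a degree/properness argument on $B_{\epsilon_0}(S)$. I would take the former route: shrink $\epsilon_0$ so that the implicit function theorem applied to $\Phi$ at $(0,S)$ (using surjectivity of $D_u\Phi(0,S)$ up to its finite-dimensional cokernel, handled by the Lyapunov--Schmidt reduction together with the free $\sigma$-parameter) produces, for every $\tilde S\in B_{\epsilon_0}(S)$, at least one solution $u$; the $G_\delta$-density then refines this to regular values, which is the content of the stated lemma. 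Care is also needed to confirm that $\Phi$ is $C^k$ for $k$ exceeding the Fredholm index bound required by Sard--Smale --- here the index is $0$ and the nonlinearity is smooth, so $C^\infty$ suffices comfortably.
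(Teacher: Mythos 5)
There is a genuine gap at the one step you yourself flag as delicate: producing a \emph{nonempty} fiber. Your transversality setup is fine up to the point where you claim that $\pi\colon\mathcal Z\to C^{0,\alpha}(M)$ is a local submersion at $(0,S)$ onto a full neighborhood of $S$. It is not, in general. The tangent space $T_{(0,S)}\mathcal Z$ is the graph $\{(v,\sigma):\sigma=\Delta v-\tfrac{2}{n}Sv\}$, so under the identification $v\mapsto(v,\Delta v-\tfrac{2}{n}Sv)$ the differential of $\pi$ \emph{is} the operator $\Delta-\tfrac{2}{n}S$; hence $\pi$ carries a neighborhood of $(0,S)$ onto a neighborhood of $S$ exactly when $\Delta-\tfrac{2}{n}S$ is surjective (equivalently invertible, being index $0$), and that is not assumed --- it is precisely the possible degeneracy that forces the conclusion to be only a $G_\delta$-dense set rather than a full ball. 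The free $\sigma$-direction makes $\Phi$ a submersion, which gives you the manifold $\mathcal Z$, but it does not make $\pi$ one; the Lyapunov--Schmidt bifurcation equation need not be solvable for every small $\sigma$ (compare $u^2=\sigma$ in one dimension). Note that if your local-surjectivity claim held, you would have proved the strictly stronger statement that \emph{every} $\tilde S\in B_{\epsilon_0}(S)$ admits a solution, making the $G_\delta$-dense set superfluous. The fallback, Sard--Smale alone, does not close the gap either: regular values of a Fredholm map form a residual set, but a regular value may simply fail to be a value, and ruling that out requires the properness or degree input you did not supply.

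The paper's proof sidesteps the degeneracy by moving the linearization from $S$ to the perturbed datum: for generic $\tilde S$ the operator $\Delta-\tfrac{2}{n}\tilde S$ \emph{is} invertible, and Equation $(\ref{eq1})$ is rewritten as the fixed-point problem $u=T(u)$ with $T(u)=(\Delta-\tfrac{2}{n}\tilde S)^{-1}\bigl(\tilde S-S+\tilde S(e^{2u/n}-1-\tfrac{2u}{n})\bigr)$; the smallness of $\|S-\tilde S\|_{C^{0,\alpha}}$ keeps a small ball invariant and $T$ is a contraction there by Schauder estimates. If you want to keep your framework, the repair is the same in substance: base the quantitative implicit function theorem at a generic $\tilde S$ with invertible linearized operator and small error term $\tilde S-S$, which is the contraction argument in different clothing.
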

\begin{proof}
First, we know that for a generic $u\in C^{0,\alpha}(M)$ we have that $\Delta-\frac{2u}{n}$ is invertible. So we asume then that $\tilde{S}$ is a such generic function in $C^{0,\alpha}(M)$ and  that $$\|S-\tilde{S}\|_{C^{0,\alpha}}<\epsilon_{0}.$$
We consider the set $$B_{\varepsilon}=\{u\in C^{0,\alpha}(M)\,|\,\|u\|_{C^{0,\alpha}}<\varepsilon\}.$$ 
We can rewrite Equation (\ref{eq1}) as
$$\Delta u-\frac{2}{n}\tilde{S}u=\tilde{S}-S+\tilde{S}(e^{\frac{2u}{n}}-1-\frac{2u}{n}).$$
Notice that if $\|u\|_{C^{0,\alpha}}<\varepsilon$ then 
$$\|\tilde{S}(e^{\frac{2u}{n}}-1-\frac{2u}{n})\|_{C^{0,\alpha}}<C\varepsilon^{2}.$$
Now, since $\Delta -\frac{2}{n}\tilde{S}$ is invertible, we consider the operator
$$T(u)=(\Delta-\frac{2}{n}\tilde{S})^{-1}(\tilde{S}-S+\tilde{S}(e^{\frac{2u}{n}}-1-\frac{2u}{n}))$$
Clearly if $|S-\tilde{S}|_{C^{0,\alpha}}$ is small, then by Schauder's estimates we have 
\begin{equation}\label{stab}
T(B_{\varepsilon})\subset B_{\varepsilon}.
\end{equation}
Moreover,
$$\|T(u)-T(v)\|_{C^{0,\alpha}}=\|O(u^{2})-O(v^{2})\|_{C^{0,\alpha}}\leq C\varepsilon \|u-v\|_{C^{0,\alpha}}$$ 
So if $\varepsilon$ is small enough, we do have a contraction and hence a fixed point thus a solution to $(\ref{eq1})$.
\end{proof}
\begin{cor}\label{cor_1}
Let $(M,J,g,F)$ be a closed balanced manifold of dimension $2n$ and $s^C$ its Chern scalar curvature.
Assume that ${C(M,J,[g])} \notin\sigma(\Delta)$, where $\sigma(\Delta)$ is the spectrum of $\Delta$. Then there exists $\epsilon_{0}>0$ such that if $$\|s^C-C(M,J,[g])\|_{C^{0,\alpha}}<\epsilon_{0}$$ then there exists a conformal metric $\tilde{g}=e^{\frac{2u}{n}}g$ of constant Chern scalar curvature $C(M,J,[g])$, where the function $u$ is normalized by $\int_Me^{\frac{2u}{n}}\,\frac{F^n}{n!}=1$.
\end{cor}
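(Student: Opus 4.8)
The plan is to obtain Corollary~\ref{cor_1} as a direct specialization of the preceding Lemma. Since $(M,J,g)$ is balanced the Lee form vanishes, $\theta=0$, so the conformal change formula~(\ref{conf_change}) reduces to
\[
\Delta u+s^C=\tilde s^C\,e^{\frac{2u}{n}},
\]
which is precisely Equation~(\ref{eq1}) with $S=s^C$ and $\tilde S=\tilde s^C$. Prescribing \emph{constant} Chern scalar curvature $C(M,J,[g])$ therefore amounts to solving~(\ref{eq1}) with $S=s^C$ and $\tilde S\equiv C(M,J,[g])$, the constant function. Before doing this I would normalize $g$: rescaling $g$ by a positive constant keeps it inside $[g]$ and preserves balancedness (hence the Gauduchon condition $\delta^g\theta=0$), so without loss of generality $\operatorname{vol}_g(M)=\int_M\frac{F^n}{n!}=1$; then $g$ is \emph{the} unit-volume Gauduchon representative of $[g]$, and by Definition~\ref{constant} we have $C(M,J,[g])=\int_M s^C\,\frac{F^n}{n!}$.

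Next I would run the argument in the proof of the Lemma with $S=s^C$ and the constant function $\tilde S\equiv C(M,J,[g])$. The only input to check is that $\Delta-\tfrac{2}{n}C(M,J,[g])$ is invertible, which is exactly the point of the standing hypothesis $C(M,J,[g])\notin\sigma(\Delta)$ (in particular $C(M,J,[g])\neq0$, since $0\in\sigma(\Delta)$). Taking the $\epsilon_{0}>0$ furnished by the Lemma, the assumption $\|s^C-C(M,J,[g])\|_{C^{0,\alpha}}=\|S-\tilde S\|_{C^{0,\alpha}}<\epsilon_{0}$ puts us in the regime where
\[
T(u)=\Bigl(\Delta-\tfrac{2}{n}C(M,J,[g])\Bigr)^{-1}\Bigl(C(M,J,[g])-s^C+C(M,J,[g])\bigl(e^{\frac{2u}{n}}-1-\tfrac{2u}{n}\bigr)\Bigr)
\]
maps a small ball $B_\varepsilon\subset C^{0,\alpha}(M)$ into itself and is a contraction there, by the Schauder estimate together with the quadratic bound $\|e^{\frac{2u}{n}}-1-\tfrac{2u}{n}\|_{C^{0,\alpha}}=O(\|u\|_{C^{0,\alpha}}^{2})$, exactly as in the Lemma. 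Its fixed point $u$, with $\|u\|_{C^{0,\alpha}}$ small, solves $\Delta u+s^C=C(M,J,[g])\,e^{\frac{2u}{n}}$, so $\tilde g=e^{\frac{2u}{n}}g\in[g]$ has constant Chern scalar curvature $\tilde s^C\equiv C(M,J,[g])$.

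It remains to see that the volume normalization of $u$ is automatic. Integrating $\Delta u+s^C=C(M,J,[g])\,e^{\frac{2u}{n}}$ over $M$ and using $\int_M\Delta u\,\frac{F^n}{n!}=0$ gives
\[
\int_M s^C\,\frac{F^n}{n!}=C(M,J,[g])\int_M e^{\frac{2u}{n}}\,\frac{F^n}{n!},
\]
and since the left-hand side equals $C(M,J,[g])$ by our choice of representative and $C(M,J,[g])\neq0$, we conclude $\int_M e^{\frac{2u}{n}}\,\frac{F^n}{n!}=1$, as required. The step I would flag as needing the most care---though it is bookkeeping rather than a genuine analytic obstacle---is matching the normalization of $g$ to the one in Definition~\ref{constant}, so that the metric produced has Chern scalar curvature exactly $C(M,J,[g])$ while also satisfying $\int_M e^{\frac{2u}{n}}\frac{F^n}{n!}=1$; all of the nonlinear analysis is already contained in the Lemma, to which the statement is essentially a corollary once one records that balanced implies $\theta=0$.
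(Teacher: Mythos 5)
Your argument is correct and is exactly the route the paper intends: Corollary~\ref{cor_1} is stated as an immediate application of the preceding Lemma with $S=s^C$ and $\tilde S\equiv C(M,J,[g])$ (the explicit spectral hypothesis replacing the genericity of $\tilde S$, balancedness killing the Lee-form term), and the paper supplies no further proof. Your extra verification that the normalization $\int_M e^{\frac{2u}{n}}\operatorname{vol}_g=1$ comes for free by integrating the equation, after first rescaling $g$ to the unit-volume Gauduchon representative so that $C(M,J,[g])=\int_M s^C\operatorname{vol}_g$, is a detail the paper leaves implicit and is worth recording; the only imprecision --- inherited from the paper's own statement --- is that invertibility of $\Delta-\tfrac{2}{n}C(M,J,[g])$ literally requires $\tfrac{2}{n}C(M,J,[g])\notin\sigma(\Delta)$ rather than $C(M,J,[g])\notin\sigma(\Delta)$.
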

\begin{rem}\label{rem_1}
Notice that this can be extended in full generality to the case of the operator $\Delta + g(\theta,d\cdot)$ with the condition $C(M,J,[g])\not \in \sigma(\Delta + g(\theta,d\cdot))$. We deduce then Theorem~\ref{thm_3}.\\
Also, the $C^{0,\alpha}$ pinching can be weakened to an integral pinching condition. Indeed, the only place where the condition $\|S-\tilde{S}\|_{C^{0,\alpha}}<\epsilon_{0}$ is used was in $(\ref{stab})$. Notice then that if we assume instead that $\|S-\tilde{S}\|_{L^{p}}<\epsilon_{1}$, we have that for $p>\frac{2n}{2}=n$, the elliptic regularity of the operator $\Delta-\frac{2}{n}\tilde{S}$, combined with the Sobolev embedding, ensures that
$$\|(\Delta-\frac{2}{n}\tilde{S})^{-1}(S-\tilde{S})\|_{C^{0,\alpha}}\leq C\|S-\tilde{S}\|_{L^{p}}.$$
The rest of the proof remains unchanged.
\end{rem}
 
\section{Flow Approach}
Let $(M,J,g)$ be closed almost-Hermitian manifold of dimension $2n$. We denote by $\operatorname{vol}_g=\frac{F^n}{n!}$ the volume form, where $F$ is the fundamental form induced by $(J,g)$. In this section,
we study the Chern--Yamabe flow~\cite{Angella:2015aa}
\begin{equation}\label{genflow}
\left\{\begin{array}{ll}
\frac{\partial u}{\partial t}&=-\Delta u-s^C-g(\theta,du)+\lambda e^{\frac{2u}{n}},\\
\\
u(x,0)&=u_{0} \in L^{\infty}(M),
\end{array}
\right.
\end{equation}
where $\Delta$ is the Riemannian Laplacian with respect to $g$, $s^C$ is the Chern scalar curvature of $(J,g)$ and $\lambda$ is a constant.
\subsection{The case ${C(M,J,[g])}<0$} By Proposition~\ref{sign_fixed}, we can suppose without loss of generality that the function
$s^C<0$ everywhere on $M.$ We also recall the maximum principle for parabolic equations.
\begin{lem}\cite{MR2265040}
Let $F:\mathbb{R}\times [ 0,t_{0}] \to \mathbb{R}$ be a smooth function and $X(t)$ a smooth family of vector fields. Assume that $u\in C^{\infty}(M\times [0,t_{0}],\mathbb{R})$ satisfies
$$\frac{\partial u}{\partial t}\leq-\Delta u +g(X(t),\nabla u)+F(u,t),$$
and let $y(t)$ be the solution of 

$$\left\{\begin{array}{ll}
y'&=F(y,t),\\
\\
y(0)&=c_{0}.
\end{array}
\right.
$$

If $u(\cdot,0)\leq c_{0}$ then $u(x,t)\leq y(t)$ for all $t\in [0,t_{0}]$.
\end{lem}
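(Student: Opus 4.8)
The plan is to deduce this comparison principle from the weak maximum principle for a \emph{linear} parabolic differential inequality satisfied by the difference $w := u - y$. Since $y$ depends only on $t$, we have $\Delta w = \Delta u$ and $\nabla w = \nabla u$, so subtracting $y' = F(y,t)$ from the hypothesis gives
\[
\frac{\partial w}{\partial t}\;\le\;-\Delta w + g\big(X(t),\nabla w\big) + \big(F(u,t)-F(y,t)\big).
\]
First I would linearize the reaction term. Because $F$ is smooth and both $u$ and $y$ are bounded on the compact set $M\times[0,t_0]$, the fundamental theorem of calculus applied in the first variable yields $F(u,t)-F(y,t)=a(x,t)\,w$, where $a(x,t)=\int_0^1 \partial_1 F\big(y+s(u-y),t\big)\,ds$ is a continuous, hence bounded, function on $M\times[0,t_0]$. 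Thus $w$ satisfies the linear inequality $\partial_t w \le -\Delta w + g(X(t),\nabla w)+a\,w$ together with $w(\cdot,0)\le 0$.

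Next I would neutralize the a priori arbitrary sign of the zeroth-order coefficient $a$ by an exponential substitution. Setting $K:=1+\sup_{M\times[0,t_0]}a$ and $v:=e^{-Kt}w$, a direct computation (dividing through by $e^{Kt}>0$) gives
\[
\frac{\partial v}{\partial t}\;\le\;-\Delta v + g\big(X(t),\nabla v\big)+(a-K)\,v,\qquad a-K\le -1<0,
\]
with $v(\cdot,0)=w(\cdot,0)\le 0$. Since $M\times[0,t_0]$ is compact and $v$ is continuous, $v$ attains its maximum. If the maximum value $m$ were positive it would be attained at some $(x_1,t_1)$ with $t_1>0$, because $v(\cdot,0)\le 0$. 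At such a point $x_1$ is an interior spatial maximum (no boundary terms arise, as $M$ is a closed manifold), so $\nabla v(x_1,t_1)=0$ and, with the sign convention for $\Delta$ under which the term $-\Delta u$ appears in the flow~(\ref{genflow}), $\Delta v(x_1,t_1)\ge 0$; moreover $\partial_t v(x_1,t_1)\ge 0$ (an interior-in-time critical point if $t_1<t_0$, or a one-sided increase if $t_1=t_0$). Substituting these into the inequality yields $0\le (a-K)\,m\le -m<0$, a contradiction. Hence $v\le 0$, equivalently $w\le 0$, i.e.\ $u(x,t)\le y(t)$ for all $(x,t)\in M\times[0,t_0]$.

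The argument is essentially routine, and the only genuinely delicate points are bookkeeping ones. The first is fixing the sign of $\Delta$ at a spatial maximum correctly: with the nonnegative Laplacian of the flow, the Hessian is negative semidefinite at a maximum, so $\Delta v\ge 0$ there, which is precisely what makes the $-\Delta v$ contribution harmless. The second, and the main obstacle, is that the coefficient $a$ may be positive, so that a naive maximum-principle argument on $w$ itself produces only $a(x_1,t_1)\,m\ge 0$ and no contradiction; this is exactly why the substitution $v=e^{-Kt}w$ with $K$ a \emph{strict} upper bound for $a$ is needed, turning the zeroth-order coefficient strictly negative and forcing the contradiction at a putative positive maximum.
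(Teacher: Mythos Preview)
The paper does not give its own proof of this lemma; it is stated with a citation to \cite{MR2265040} and used as a black box. Your argument is correct and is essentially the standard proof one finds in that reference: form the difference $w=u-y$, linearize the reaction term via the integral mean-value formula, kill the possibly positive zeroth-order coefficient by the exponential substitution $v=e^{-Kt}w$, and derive a contradiction at a putative positive maximum of $v$ on the compact set $M\times[0,t_0]$. The two bookkeeping points you flag (the sign of $\Delta v$ at a spatial maximum under the paper's convention, and the necessity of choosing $K$ strictly larger than $\sup a$) are handled correctly.
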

Now, using the flow (\ref{genflow}), we recover Angella--Calamai--Spotti's result~\cite{Angella:2015aa} on the existence of negative constant Chern scalar
curvature metric in $[g]$ when the fundamental constant is negative. 
\begin{thm}\label{thm_1}
Suppose that ${C(M,J,[g])}<0$ and that $s^C<0$. Then for every $\lambda <0$ and $k \in \mathbb{N}$, the flow $(\ref{genflow})$, with $u_{0}\in L^{\infty}(M)$, converges to a solution of the equation
$$\Delta u +g(\theta,du)+s^C =\lambda e^{\frac{2u}{n}},$$
exponentially in the $C^{k}$ sense.
\end{thm}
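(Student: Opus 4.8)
The plan is to run the standard program for a semilinear parabolic flow: short-time existence with instantaneous smoothing, uniform a priori estimates, convergence, and identification of the limit. Equation~\eqref{genflow} is semilinear and uniformly parabolic, so for $u_0\in L^\infty(M)$ it has a unique solution on a maximal interval $[0,T_{\max})$ which is smooth on $M\times(0,T_{\max})$ by parabolic smoothing. Because the datum is only bounded, the comparison arguments below are applied on $[\tau,T_{\max})$ for $\tau>0$ and then $\tau\to 0^+$; this is the only place the weak regularity of $u_0$ requires care.

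Next I would establish uniform $L^\infty$ bounds from the maximum principle lemma above. Since $s^C<0$ and $\lambda<0$, taking $X(t)\equiv-\theta$ we have $-s^C(x)+\lambda e^{\frac{2u}{n}}\le F_+(u):=-\min_M s^C+\lambda e^{\frac{2u}{n}}$, and the scalar ODE $y'=F_+(y)$ has a globally attracting equilibrium $y_+^\ast=\tfrac n2\log\!\big(\min_M s^C/\lambda\big)$ (both quantities negative, so the logarithm is defined, and $F_+$ is strictly decreasing). Hence $u(x,t)\le\max\{\|u_0\|_{L^\infty},\,y_+^\ast\}=:B$, and applying the lemma to $-u$ in the same fashion gives a lower barrier $u(x,t)\ge A$, where $A,B$ depend only on $\|u_0\|_{L^\infty}$, $\min_M s^C$, $\max_M s^C$, $\lambda$ and $n$. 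In particular $e^{\frac{2u}{n}}$ is bounded above and away from $0$ uniformly in $(x,t)$; together with interior parabolic estimates this rules out finite-time blow-up, so $T_{\max}=\infty$.

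The heart of the matter is convergence, and here there is no obvious Lyapunov functional because the drift $g(\theta,d\cdot)$ makes the spatial operator non-self-adjoint; I would instead differentiate the flow in time. The function $v:=\partial_t u$ solves the linear parabolic equation $\partial_t v=-\Delta v-g(\theta,dv)+\tfrac{2\lambda}{n}e^{\frac{2u}{n}}\,v$, whose zeroth-order coefficient satisfies $\tfrac{2\lambda}{n}e^{\frac{2u}{n}}\le\tfrac{2\lambda}{n}e^{\frac{2A}{n}}=:-c_0<0$ uniformly in time --- this is exactly where both $\lambda<0$ and $s^C<0$ (through the lower barrier $A$) are used. The maximum principle then yields $\|\partial_t u(\cdot,t)\|_{L^\infty}\le e^{-c_0(t-1)}\,\|\partial_t u(\cdot,1)\|_{L^\infty}$ for $t\ge 1$, the right-hand side being finite by the smoothing.

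Finally, exponential decay of $\|\partial_t u(\cdot,t)\|_{L^\infty}$ makes $t\mapsto u(\cdot,t)$ Cauchy in $C^0(M)$; call the limit $u_\infty$. The uniform bound $A\le u\le B$ and interior parabolic Schauder/$L^p$ estimates on unit time intervals give uniform-in-$t$ bounds for $u$ in every $C^{k}(M)$, and bootstrapping the equation for $v$ in the same way upgrades the previous step to $\|\partial_t u(\cdot,t)\|_{C^k}\le C_k e^{-c_0 t}$, so that $u(\cdot,t)\to u_\infty$ exponentially in $C^k(M)$ for every $k$ and $u_\infty\in C^\infty(M)$. Letting $t\to\infty$ in \eqref{genflow} (the left side tends to $0$, the right side to its value at $u_\infty$) gives $\Delta u_\infty+g(\theta,du_\infty)+s^C=\lambda e^{\frac{2u_\infty}{n}}$, as claimed. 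The only genuine obstacle is the absence of variational structure; the differentiated-flow estimate, powered by the two sign hypotheses, is what replaces the usual energy-monotonicity argument, and one must be slightly careful passing from the bounded initial datum to smooth solutions before invoking the maximum principle.
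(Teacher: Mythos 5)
Your proposal is correct and follows essentially the same route as the paper: ODE barriers via the parabolic maximum principle give global existence and uniform $L^\infty$ bounds, then exponential decay of $v=\partial_t u$ comes from the linearized equation whose zeroth-order coefficient $\tfrac{2\lambda}{n}e^{\frac{2u}{n}}$ is uniformly negative, and differentiating/bootstrapping that equation upgrades the decay to $C^k$. The only cosmetic difference is that you control the signed quantity $v$ directly by comparison with $\pm Me^{-c_0 t}$, whereas the paper fixes the sign by working with $h=\tfrac12 v^2$; both are fine.
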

\begin{proof}
The short time existence of the flow~(\ref{genflow}) is guaranteed by the classical parabolic equation theory~\cite{MR2265040,MR2744149}. Hence, there exists $T>0$ such that the solution exists for all $t\in [0,T)$. We first show that the solution is global, that is $T=+\infty$. We assume that,  $-b<s^C<-a<0$. Then,
$$\frac{\partial u}{\partial t}\leq -\Delta u+b+\lambda e^{\frac{2u}{n}}.$$
Using the maximum principle, we can compare it to the ODE
$$
\left\{ \begin{array}{ll}
\ddfrac{dy_{1}}{d t}&=b+\lambda e^{\frac{2y_{1}}{n}},\\
\\
y_{1}(0)&=c_{0}.
\end{array}
\right.
$$

The solution of this equation takes the form
$$y_{1}(t)=\frac{n}{2}\ln\left(-\frac{b}{\lambda}\frac{c_{1}e^{\frac{2bt}{n}}}{1+c_{1}e^{\frac{2bt}{n}}}\right),$$
where $$c_{1}=\frac{-\lambda e^{\frac{2c_{0}}{n}}}{b+\lambda e^{\frac{2c_{0}}{n}}}.$$
Similarly, we consider the ODE,
$$
\left\{ \begin{array}{ll}
\ddfrac{dy_{2}}{d t}&=-a-\lambda e^{\frac{-2y_{2}}{n}},\\
\\
y_{2}(0)&=c_{0}.
\end{array}
\right.
$$

Then again,
$$y_{2}(t)=-\frac{n}{2}\ln\left(-\frac{a}{\lambda}\frac{c_{2}e^{\frac{2at}{n}}}{c_{2}e^{\frac{2at}{n}}-1}\right),$$
where $$c_{2}=\frac{\lambda e^{\frac{-2c_{0}}{n}}}{a+\lambda e^{\frac{-2c_{0}}{n}}}.$$
If we choose now $c_{0}>>\|u_{0}\|_{L^{\infty}}$, then we have by the comparaison principle that
$$-y_{2}(t)\leq u(t) \leq y_{1}(t).$$
Since $y_{1}$ and $y_{2}$ are bounded, then $u$ does not blow up and exists for all time. Moreover $\|u(t)\|_{L^{\infty}}$ is bounded. The boundedness in $L^{\infty}(M)$ generates then a boundedness in $C^{2,\alpha}(M)$.\\
We set now $v=\frac{\partial u}{\partial t}$, then $v$ satisfies the linear parabolic equation
$$\frac{\partial v} {\partial t}=-\Delta v -g(\theta,dv)+\frac{2\lambda}{n}e^{\frac{2u}{n}}v.$$
Since $\lambda<0$, we have the existence of $\beta>0$ such that $$\|v\|_{L^{\infty}}\leq Ce^{-\beta t}.$$
But first, we need to fix the sign of $v$. So we consider $h=\frac{1}{2}v^{2}$, then
\begin{align}
\frac{\partial h}{\partial t}&=-v\Delta v -g(\theta,vdv)+\frac{2\lambda}{n}e^{\frac{2u}{n}}v^{2}\notag\\
&=-\Delta h +|\nabla v|^{2}-g(\theta,dh)+\frac{4\lambda}{n}e^{\frac{2u}{n}}h\notag\\
&=-\Delta h+g\left(\nabla h,\nabla \left(\ln(|v|)\right)\right)-g(\theta,dh)+\frac{4\lambda}{n}e^{\frac{2u}{n}}h\notag
\end{align}
Since $h\geq 0$, we can compare to the linear ODE
\[\begin{aligned}
y'&=\frac{4\lambda}{n}mh,\\
y(0)&=c_{0},
\end{aligned}\]
where $m=\displaystyle\min_{t\in \mathbb{R},x\in M} e^{\frac{2u(x,t)}{n}}$. So, if $c_{0}>\|h(0)\|_{L^{\infty}}$, we have that
$$h(t,x)\leq Ce^{\frac{4m\lambda}{n}}.$$
It follows then that
$$\|v(t)\|_{L^{\infty}}\leq Ce^{\frac{2m\lambda}{n}}.$$
To get a better convergence we consider the function $v_{1}=\partial_{i}v$. Then $v_{1}$ satisfies
\begin{align}
\frac{\partial v_{1}} {\partial t}&=-\Delta v _{1}-g(\theta,dv_{1})+\frac{2\lambda}{n}e^{\frac{2u}{n}}v_{1}-g(\partial_{i}\theta,dv)-\left(\partial_{i}g\right)(\theta,dv)+\frac{4\lambda}{n^{2}}\left(\partial_{i}u\right)e^{\frac{2u}{n}}v,\notag\\
&=-\Delta v _{1}-g(\theta,dv_{1})+\frac{2\lambda}{n}e^{\frac{2u}{n}}v_{1}+G(x,t).\notag
\end{align}
So $v_{1}$ satisfies the same equation as $v$ up to the term $G(x,t)$ which decays exponentially to zero. Using the same trick, we have that $\|v_{1}\|_{L^{\infty}}\leq Ce^{-\beta t}$.
This implies that $\frac{\partial u}{\partial t}$ converges to zero exponentially in $C^{1}(M)$. Hence, the flow converges to a solution of the desired equation in the $C^{1}$ sense. Iterating this argument yields a $C^{k}$ exponential convergence for for all $k>0$.
\end{proof}

\subsection{The case ${C(M,J,[g])}>0$.} We want now to investigate the case when ${C(M,J,[g])}$ is positive.
We restrict ourselves to a $2n$-dimensional closed balanced manifold $(M,J,g)$ (so the Lee form $\theta=0$) and
we will consider a slightly modified flow namely
\begin{equation}\label{flow2}
\left\{\begin{array}{ll}
\ddfrac{\partial u}{\partial t}&=-\Delta u -s^C+\lambda(t)\,e^{\frac{2u}{n}},\\
\\
u(x,0)&=0,
\end{array}
\right.
\end{equation}
where $\lambda(t)=\frac{\int_M s^C\operatorname{vol}_g}{\int_{M}e^{\frac{2u}{n}}\operatorname{vol}_g}$, $\Delta$ is the Riemannian Laplacian with respect to $g$
and $s^c$ the Chern scalar curvature of $(J,g).$

We denote by $H^{k,2}(M)$ the Sobolev space of functions on $M$ involving derivatives up to the order $k$.
We want to prove that if $s^C$ is small enough in $H^{k,2}$-norm (for $k>n$) then the flow~(\ref{flow2}) converges to a solution of the Chern--Yamabe problem.
The first property of the flow~(\ref{flow2}) is that $\int_{M}u\operatorname{vol}_g=0$ as long as the solution exists. Indeed, if we take $f(t)=\int_{M}u\operatorname{vol}_g$, then $f(0)=0.$ Moreover, we have that 
$$f'(t)=\int_{M}\frac{\partial{u}}{\partial t}\operatorname{vol}_g=\int_{M}-\Delta u-s^C + \lambda(t)e^{\frac{2u}{n}}\operatorname{vol}_g=0.$$
\begin{prop}
The solutions of $(\ref{flow2})$ exists globally.
\end{prop}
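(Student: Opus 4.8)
The plan is to prove short-time existence, to record the quantities that the flow conserves or monotonically controls, to deduce from these an a priori bound on $u(\cdot,t)$ on every finite subinterval of the maximal interval of existence, and then to conclude by the standard continuation criterion together with parabolic regularity.

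\emph{Short-time existence and reduction.} The right-hand side of $(\ref{flow2})$ is a semilinear uniformly parabolic operator plus the nonlocal scalar $\lambda(t)=\big(\int_M s^C\operatorname{vol}_g\big)\big/\big(\int_M e^{2u/n}\operatorname{vol}_g\big)$, which depends on $u$ in a locally Lipschitz fashion through $V(t):=\int_M e^{2u/n}\operatorname{vol}_g>0$; classical semilinear parabolic theory (as in the proof of Theorem~\ref{thm_1}) then yields a unique smooth solution on a maximal interval $[0,T_{\max})$. As already observed, $\int_M u(\cdot,t)\operatorname{vol}_g=0$ on $[0,T_{\max})$; hence Jensen's inequality gives $V(t)\geq\operatorname{vol}_g(M)$, so that $|\lambda(t)|\leq\Lambda_0:=\big|\int_M s^C\operatorname{vol}_g\big|/\operatorname{vol}_g(M)$ and $|\lambda(t)|\,V(t)=\big|\int_M s^C\operatorname{vol}_g\big|$ are bounded uniformly in $t$. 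Since $k>n$ we have $H^{k,2}(M)\hookrightarrow C^{0}(M)$, so it suffices to prove that $\|u(\cdot,t)\|_{L^{\infty}(M)}$ stays bounded on every $[0,T]\subset[0,T_{\max})$: a uniform $L^{\infty}$ bound makes the zeroth-order term $-s^C+\lambda(t)e^{2u/n}$ a bounded forcing, so parabolic $L^{p}$ and Schauder estimates, iterated, give a uniform $H^{k,2}$ bound on $[0,T]$, which contradicts $T_{\max}<\infty$.

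\emph{Maximum principle and the case $\int_M s^C\operatorname{vol}_g\leq0$.} At a spatial minimum of $u(\cdot,t)$ one has $\Delta u\leq0$, and the parabolic maximum principle recalled above compares $\min_M u$ with an ODE $y'=-\|s^C\|_{\infty}+\lambda(t)e^{2y/n}$ whose solution cannot run off to $-\infty$ in finite time; this yields a lower bound $\min_M u(\cdot,t)\geq-\Psi(t)$ with $\Psi$ finite on compact intervals, and together with $\int_M u=0$ an $L^{1}$ bound $\|u(\cdot,t)\|_{L^{1}}\leq C(1+\Psi(t))$. If $\int_M s^C\operatorname{vol}_g\leq0$ then $\lambda(t)\leq0$, and the symmetric computation at a spatial maximum gives $\partial_t(\max_M u)\leq\|s^C\|_{\infty}$, hence $\max_M u(\cdot,t)\leq\|s^C\|_{\infty}t$ and global existence. (Alternatively, $(\ref{flow2})$ is the $L^{2}$-gradient flow of $\mathcal{E}(u)=\tfrac12\int_M|\nabla u|^{2}\operatorname{vol}_g+\int_M s^C u\operatorname{vol}_g-\tfrac n2\big(\int_M s^C\operatorname{vol}_g\big)\log\int_M e^{2u/n}\operatorname{vol}_g$, so $\tfrac{d}{dt}\mathcal{E}(u(t))=-\|\partial_t u(t)\|_{L^{2}}^{2}\leq0$; when $\int_M s^C\operatorname{vol}_g\leq0$ the last term of $\mathcal{E}$ is bounded below, so $\mathcal{E}(u(t))\leq\mathcal{E}(0)$ and Poincar\'e's inequality yields a uniform bound on $\|\nabla u(\cdot,t)\|_{L^{2}}$, re-proving global existence in that case.)

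\emph{The case $\int_M s^C\operatorname{vol}_g>0$: the main obstacle.} Here $\lambda(t)>0$ and the reaction term $\lambda(t)e^{2u/n}$ is destabilising; the naive scalar barrier $y'=\|s^C\|_{\infty}+\Lambda_0 e^{2y/n}$ blows up in finite time, so the crude bound $\lambda\leq\Lambda_0$ is insufficient and $\mathcal{E}$ need no longer be bounded below. The structural fact to exploit is that this destabilisation is self-limiting: from $\lambda(t)\,V(t)=\int_M s^C\operatorname{vol}_g$, if $u$ grows then $V(t)$ grows and $\lambda(t)$ shrinks in exact proportion. I would turn this into an $L^{\infty}$ bound by an $L^{p}$-type (Moser) iteration on $\psi:=e^{u/n}$: testing $(\ref{flow2})$ against $e^{pu/n}$ and integrating by parts gives, for $F_p(t):=\int_M e^{pu/n}\operatorname{vol}_g$,
\[
F_p'=-4\,\|\nabla(e^{pu/(2n)})\|_{L^{2}}^{2}-\tfrac pn\int_M e^{pu/n}s^C\operatorname{vol}_g+\tfrac pn\,\lambda(t)\,F_{p+2}(t),
\]
and one checks that $\lambda(t)F_{p+2}=\big(\int_M s^C\operatorname{vol}_g\big)F_{p+2}/F_2$, once the base exponent exceeds the dimension $2n$, can be interpolated between a fixed lower $F_a$ and $F_{p\chi}$ (with $\chi=\tfrac n{n-1}$, from the Sobolev inequality) and absorbed into the good term $-4\|\nabla(e^{pu/(2n)})\|_{L^{2}}^{2}$ modulo lower order terms; the Moser iteration then passes from a bound on $F_{2n}(t)=\int_M e^{2u}\operatorname{vol}_g$ to a bound on $\|e^{u/n}\|_{L^{\infty}}$, i.e.\ an upper bound on $u$, on every finite interval. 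The technical heart — and the main obstacle — is the first rung: a finite-interval bound on $F_{2n}(t)$, equivalently ruling out that $u$ concentrates near its supremum in finite time; I would obtain it by coupling the differential inequality for $F_{2n}$ with the gradient estimate from Duhamel's formula and the heat-kernel bound $\|\nabla e^{-\tau\Delta}\|_{L^{\infty}\to L^{\infty}}\lesssim\tau^{-1/2}$ (a gradient bound forces $u$ to stay near $\max_M u$ on a geodesic ball of controlled radius, making $V(t)$ large there, which feeds back as $\lambda(t)$ small and defeats the putative blow-up). Once the two-sided $L^{\infty}$ bound on $[0,T]$ is secured, parabolic regularity upgrades it to the $H^{k,2}$ bound, and the continuation criterion gives $T_{\max}=+\infty$.
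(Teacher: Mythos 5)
Your reduction (conservation of $\int_M u\operatorname{vol}_g=0$, Jensen's inequality giving $|\lambda(t)|\leq\Lambda_0$, and the observation that a finite-interval $L^\infty$ bound plus parabolic regularity rules out $T_{\max}<\infty$) matches the paper's setup, and your treatment of the case $\int_M s^C\operatorname{vol}_g\leq 0$ is fine. But in the only nontrivial case, $\int_M s^C\operatorname{vol}_g>0$, you do not actually give a proof: the Moser iteration you outline rests on a base-case bound for $F_{2n}(t)=\int_M e^{2u}\operatorname{vol}_g$ on finite intervals, which you yourself flag as ``the main obstacle'' and support only with a heuristic (a gradient bound forcing $V(t)$ to be large near a putative blow-up, hence $\lambda(t)$ small). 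That is exactly the step where a finite-time concentration scenario would have to be excluded, and nothing in the sketch excludes it; as written, the positive case is a program rather than an argument.

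The paper closes this gap with a more elementary device that your remark ``the naive scalar barrier blows up in finite time'' passes over: since one only needs to prevent blow-up before the \emph{fixed} time $T=T_{\max}$, constants depending on $T$ are admissible. Substituting $v=e^{-ct}u$ turns (\ref{flow2}) into a differential inequality of the form
\[
\frac{\partial v}{\partial t}\leq -\Delta v-cv+A_0+\delta e^{Av},\qquad A=\tfrac{2}{n}e^{cT},
\]
i.e.\ the same reaction term but now with a linear damping $-cv$; the paper then compares with the autonomous ODE $y'=\delta e^{Ay}-cy+A_0$, which for a suitable choice of $c$ has two equilibria $y_0<y_1$, so a solution started below $y_1$ stays bounded on $[0,T]$, yielding $u\leq y(t)e^{ct}$ (and similarly for $-u$) and the desired contradiction. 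You should either supply a genuine proof of your $F_{2n}$ bound or adopt a $T$-dependent barrier of this kind; note that in the regime where the Proposition is actually invoked later (Lemma~\ref{lem_1}, $s^C$ small), even the naive barrier only survives for a long but still finite time, so the issue cannot be dismissed by smallness alone.
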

\begin{proof}
The short time existence is guaranteed by the classical parabolic PDE theory. Now, suppose that the solution $u$ exists on an interval $[0,T)$ where $T<+\infty$. Let $c>0$ to be fixed later and define $v=e^{-ct}u$. Then, $v$ satisfies
$$\frac{\partial v}{\partial t}=-\Delta v-cv -e^{-ct}s^C+\lambda(t)e^{\frac{2ve^{ct}}{n}-ct}.$$
Then, we have
$$\frac{\partial v}{\partial t}\leq-\Delta v-cv+A_{0}+\delta e^{Av},$$
where $\delta$ is an upper bound of $\lambda(t)$, $A_{0}$ an upper bound for $-s^{C}e^{-ct}$ for $t\in[0,T]$ and $A=\frac{2e^{cT}}{n}$. We compare then to the ODE
$$y'=\delta e^{Av}-cv+A_{0}.$$
By looking at the phase space of this autonomous ODE, we see that the equation has two equilibrium solutions if $c$ is big enough, let us call them $y_{0}<y_{1}$. So if $y(0)$ is chosen so that $0<y(0)<y_{1}$, we have that $y$ exists globally and bounded from above. But we have from the comparison principle that
$$u(t)\leq y(t)e^{ct}.$$
A similar bound also folds for $-u$.  This leads to a contradiction hence $T=+\infty$.
\end{proof}

\begin{lem}\label{lem_1}
Let $k>n$. Then, there exists $\delta>0$ and $\varepsilon_{0}>0$ such that if $\|s^C\|_{L^{2}}<\delta,$ we have $\|u\|_{H^{k,2}}<\varepsilon_{0}$ for all $t>0$.
\end{lem}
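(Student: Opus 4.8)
The plan is to establish a uniform \emph{a priori} bound on the solution $u(\cdot,t)$ of the flow~(\ref{flow2}) in the high Sobolev norm $H^{k,2}(M)$ by a bootstrap/continuity argument, starting from the smallness of $\|s^C\|_{L^2}$ and the already-established conservation law $\int_M u\,\operatorname{vol}_g=0$. The key observation is that when $\|s^C\|_{L^2}<\delta$ is small, the constant $\lambda(t)=\frac{\int_M s^C\operatorname{vol}_g}{\int_M e^{2u/n}\operatorname{vol}_g}$ is controlled: the numerator is $O(\delta)$ by Cauchy--Schwarz, and the denominator is bounded below (say by a fixed positive constant depending only on the geometry) as long as $u$ stays in a bounded region of a space controlling $\sup|u|$, which $H^{k,2}$ does for $k>n$ via Sobolev embedding $H^{k,2}(M)\hookrightarrow C^0(M)$. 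So the right framework is a standard open/closed argument: let $I\subset[0,\infty)$ be the set of times where $\|u(\cdot,t)\|_{H^{k,2}}<\varepsilon_0$; it is open and nonempty (contains $0$ since $u(x,0)=0$), and the goal is to show it is closed, i.e. that on $I$ one actually gets the strictly better bound $\|u(\cdot,t)\|_{H^{k,2}}\le \varepsilon_0/2$, so $I$ cannot have a finite supremum that is a boundary point.

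To carry this out I would first differentiate the energy $E(t)=\frac12\|u(\cdot,t)\|_{H^{k,2}}^2$ (or work norm-by-norm, starting with $\|u\|_{L^2}^2$ and climbing up to $\|\Delta^{k/2}u\|_{L^2}^2$ or the appropriate integer-order analogue). Using the flow equation $\partial_t u=-\Delta u-s^C+\lambda(t)e^{2u/n}$, integration by parts produces a leading dissipative term $-\|u\|_{H^{k+1,2}}^2$-type quantity (more precisely $-\int |\nabla \Delta^{k/2}u|^2$ plus lower order) against which the forcing terms must be absorbed. The term coming from $s^C$ contributes something like $\|s^C\|_{H^{k,2}}\|u\|_{H^{k,2}}$, hence is $O(\delta\,\varepsilon_0)$; the term coming from $\lambda(t)e^{2u/n}$ is $|\lambda(t)|$ times derivatives of $e^{2u/n}$, and since $\|u\|_{H^{k,2}}<\varepsilon_0$ with $k>n$ controls $\|u\|_{C^0}$ and hence makes $e^{2u/n}$ close to $1$ with controlled Sobolev norm, this term is $O(\delta)\cdot O(1+\varepsilon_0)$. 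Combining, $\frac{d}{dt}E \le -c_0\,\|u\|_{H^{k,2}}^2 + C(\delta + \delta\varepsilon_0 + \delta^2)$ after using a Poincaré-type inequality (valid because $\int_M u\,\operatorname{vol}_g=0$, so $u$ has no kernel component for $\Delta$); choosing first $\varepsilon_0$ small and then $\delta\ll\varepsilon_0$ small enough makes the right side negative once $\|u\|_{H^{k,2}}$ exceeds, say, $\varepsilon_0/2$, which yields the desired improved bound and closes the continuity argument.

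The main obstacle I anticipate is handling the nonlinearity $e^{2u/n}$ in high Sobolev norms and keeping the lower bound on the denominator of $\lambda(t)$ honest throughout the bootstrap: one must be careful that the Sobolev embedding constant, the Moser/Gagliardo--Nirenberg estimates for $\|\nabla^j(e^{2u/n})\|_{L^2}$ in terms of $\|u\|_{H^{k,2}}$, and the lower bound $\int_M e^{2u/n}\operatorname{vol}_g \ge c > 0$ are all \emph{quantitatively} uniform in $t$ and depend only on $\varepsilon_0$ (not on the a priori unknown bound one is trying to prove). This is precisely why one needs $k>n$ rather than merely $k\ge 1$: the algebra/Moser estimates for products and compositions in $H^{k,2}$ require $H^{k,2}$ to be a Banach algebra, which holds exactly when $k>n=\dim M/2$. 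A secondary technical point is that the flow~(\ref{flow2}) is only quasilinear-parabolic with the mildly nonlocal coefficient $\lambda(t)$, but since $\lambda(t)$ is a scalar depending on $u$ in a way that is already controlled once $\|u\|_{C^0}$ is, this does not affect the energy estimates beyond bookkeeping. Once the uniform $H^{k,2}$ bound is in hand the lemma is proved; it will then feed into Theorem~\ref{thm_2} to give subconvergence and, via the monotonicity/dissipation, actual convergence to a solution of the stationary equation.
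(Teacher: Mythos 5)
Your argument is sound in outline but follows a genuinely different route from the paper's. The paper does not run an energy estimate: it writes the solution by Duhamel's formula,
$u(t)=\int_0^t e^{-(t-s)\Delta}\bigl(-s^C+\lambda(s)e^{\frac{2u}{n}}\bigr)\,ds$,
observes that the forcing has zero mean (so the semigroup acts on the mean-zero subspace, where it decays exponentially), bounds the forcing by $C\delta e^{C\|u\|_{H^{k,2}}}$ using $H^{k,2}\hookrightarrow L^{\infty}$ for $k>n$, and closes with a first-time argument: at the first time $t_0$ with $\|u(t_0)\|_{H^{k,2}}=\varepsilon$ one gets $\varepsilon\le C\delta$, which is impossible once $\delta=O(\varepsilon^2)$. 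Your open/closed continuity argument is the same closing device; what differs is how the a priori bound is produced --- a differential inequality for $\tfrac12\|u\|_{H^{k,2}}^2$ with the dissipation retained via Poincar\'e on mean-zero functions, versus a mild-solution representation with semigroup decay. Both rest on the same two pillars: the conservation $\int_M u\,\operatorname{vol}_g=0$ and the embedding/algebra property of $H^{k,2}$ for $k>n$. Your route is workable provided you measure the $H^{k,2}$ norm with spectral powers $\Delta^{j/2}$ (as you indicate) rather than covariant derivatives, so that no curvature commutators spoil the sign of the top-order dissipative term; the paper's route sidesteps commutators entirely, at the price of being somewhat loose about which Sobolev norms the semigroup estimate is taken between. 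One point common to both treatments: controlling the $s^C$ forcing at order $k$ genuinely uses $\|s^C\|_{H^{k,2}}$ (or at least $H^{k-1,2}$), not the $L^2$ norm in the lemma's statement --- the paper's proof silently switches to assuming $\|s^C\|_{H^{k,2}}<\delta$, consistent with Theorem~\ref{thm_2}, and your bound $\|s^C\|_{H^{k,2}}\|u\|_{H^{k,2}}=O(\delta\varepsilon_0)$ makes the same substitution. Finally, the lower bound on $\int_M e^{\frac{2u}{n}}\operatorname{vol}_g$ comes for free from Jensen's inequality and $\int_M u\,\operatorname{vol}_g=0$, so you do not need the $L^{\infty}$ control to keep $\lambda(t)=O(\delta)$ honest.
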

\begin{proof}
We stress first on the fact that $\int_{M}-s^C+\lambda(t)e^{\frac{2u}{n}}\operatorname{vol}_g=0$ along the flow. Now for $k$ big enough ($k>n$), the first eigenvalue of the operator $-\Delta$ on the space $H^{k}_{0}=\{u\in H^{k,2}(M)\,|\,\int_{M}u\operatorname{vol}_g=0\}$ is strictly negative hence $\|e^{-t\Delta}\|_{H^{k}_{0},H^{k}_{0}}\leq C_{0}e^{-ct}$. In fact, if we decompose the operator on an orthonormal basis of $L^{2}$ generated by the eigenfunctions of $-\Delta$, we see that we have a stronger result. That is, for $t>0$, there exists $C>0$, such that 
$\|e^{-t\Delta}\|_{H^{k}_{0},H^{0}_{0}}\leq C e^{-ct}$ with the convention that $H^{0,2}(M)=L^{2}(M)$.
 The solution $u(t)$ can be represented as
$$u(t)=\int_{0}^{t}e^{-(t-s)\Delta}\left(-s^C+\lambda(t)e^{\frac{2u}{n}}\right)dt.$$
So assume $\|s^C\|_{H^{k,2}}<\delta$, since $H^{k,2}(M)\rightharpoonup L^{\infty}(M)$ for $k>n$, we have that $$\|\lambda{t} e^{\frac{2u}{n}}\|_{L^{\infty}}\leq C\delta e^{C\|u\|_{H^{k,2}}}$$
Hence 
$$\|-s^C+\lambda(t)e^{\frac{2u}{n}}\|_{L^{2}}\leq C\delta e^{\|u\|_{H^{k,2}}}.$$
Let $t_{0}$ be the first time for which $\|u(t)\|_{H^{k,2}}=\varepsilon$. Since $-s^C+\lambda(t)e^{\frac{2u}{n}}\in H^{k}_{0}$, we have then $$\varepsilon\leq C \delta.$$
Thus if we take $\delta=O(\varepsilon^{2})$ we see that for $\varepsilon$ small enough $t_{0}$ cannot be reached. 
\end{proof}

\begin{proof}[Proof of Theorem \ref{thm_2}]
The proof of Theorem~\ref{thm_2} is in fact  a straightforward consequence of Lemma~\ref{lem_1} and decay estimates on the time derivative. So, we need to show the convergence of $v=\frac{\partial u}{\partial t}$ to zero in the $H^{k,2}$-norm as in the proof for the negative case. First, we observe that $\int_{M}v\operatorname{vol}_g=0$. Let $F(t)=\frac{1}{2}\|v\|_{L^{2}}^{2}$, then
\begin{align}
F'(t)&=-\|\nabla v\|_{L^{2}}^{2}+\lambda'(t)\int_{M}e^{\frac{2u}{n}}v\operatorname{vol}_g+\frac{2}{n}\lambda(t)\int_{M}v^{2}e^{\frac{2u}{n}}\operatorname{vol}_g\notag\\
&\leq -\|\nabla v\|_{L^{2}}^{2}+C\delta \|v\|_{L^{2}}^{2} +C\delta \|v\|_{L^{2}}^{2}\notag
\end{align}
Since $\int_{M}v\operatorname{vol}_g=0$ by the Poincar\'{e} inequality, we have that $\|v\|_{L^{2}}\leq C \|\nabla v\|_{L^{2}}$, therefore, for $\delta$ even smaller, we have that
$$F'(t)\leq -cF(t)$$
So $F$ decays exponentially. The rest of the proof follows then exactly like in the negative case.

\end{proof}
\section{Case of symmetric manifolds}
Let $(M,J,g)$ be a balanced closed manifold of dimension $2n$ and $s^C$ its Chern scalar curvature. In this section, we
assume the existence of a subgroup $G\subset Isom(M,g)$ such that $\dim G=2n-2$ and $G$ acts freely on $M$. Notice in this case that $\widetilde{M}=M/G$ has the structure of a $2$-dimensional manifold. 
We set $H^{1}_{G}(M)$ to be the set of $G$-invariant $H^{1,2}(M)$ functions, that is
$$H^{1}_{G}(M)=\{u\in H^{1,2}(M)\,|\, u(sx)=u(x), \forall x\in M, s\in G\}.$$
Notice that if $u\in H^{1}_{G}(M)$, then $u=v\circ \pi$ where $\pi:M\to \tilde{M}$ is the canonical projection and $v\in H^{1,2}(\widetilde{M})$. This method of equivariant reduction was heavily used for critical problems such as the classical Yamabe problem in~\cite{MR863646}, the CR--Yamabe problem in~\cite{MR2975684,MR3357578} and the spinorial Yamabe type problem~\cite{MR3490901}. This reduction process, usually provides a way to go from a critical setting to a subcritical setting. In our case, the problem is super-critical, so we will use this process to move from the super-critical setting to the critical setting which is very similar to the scalar curvature problem on Riemann surfaces.\\

We define the functional $\mathcal{E}$ on $H_G^{1}(M)$ by

$$\mathcal{E}(u)=\int_{M}\frac{1}{2}|\nabla u|^{2}+s^Cu\operatorname{vol}_g -\frac{n}{2}\int_M s^C\operatorname{vol}_g\ln\left(\int_M e^{\frac{2u}{n}} \operatorname{vol}_g\right)$$

If $s^C$ is invariant under the action of $G$, then the functional $\mathcal{E}$ is invariant under $G$ hence it descends to a functional on $\tilde{M}$. Notice that the critical points of $\mathcal{E}$ satisfy the Euler--Lagrange equation
$$\Delta u +s^C =\frac{\int_M s^C\operatorname{vol}_g}{\int e^{\frac{2u}{n}}\operatorname{vol}_g}e^{\frac{2u}{n}}.$$
\begin{prop}\label{prop_1}
Assume that $\int_M\operatorname{vol}_g=1.$ If $s^C$ is invariant under $G$ and $$\int_M s^C\operatorname{vol}_g={C(M,J,[g])} <2\pi n^{2}.$$ Then there exists a $G$-invariant metric $\tilde{g}=e^{\frac{2u}{n}}g$ of constant Chern scalar curvature.
\end{prop}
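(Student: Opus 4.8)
The plan is to minimize the functional $\mathcal{E}$ over the space $H^1_G(M)$ of $G$-invariant Sobolev functions. By the equivariant reduction described above, every $u\in H^1_G(M)$ descends to a function $v\in H^{1,2}(\widetilde M)$ on the $2$-dimensional quotient $\widetilde M=M/G$, and $\mathcal{E}$ descends to a Moser--Trudinger-type functional on the surface $\widetilde M$. So the first step is to rewrite $\mathcal{E}$ as a functional on $\widetilde M$ of the familiar form $\frac12\int_{\widetilde M}|\nabla v|^2 + \int_{\widetilde M}\bar s v - \frac n2\bigl(\int_{\widetilde M}\bar s\bigr)\ln\int_{\widetilde M}e^{2v/n}$, where $\bar s$ is the pushforward of $s^C$ (with the appropriate fiber-volume weight) and the total mass of the relevant measure is normalized. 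The point of passing from the super-critical setting on $M$ to the critical setting on the surface is precisely that the Moser--Trudinger inequality is available on $\widetilde M$.

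The second step is the analytic heart: apply the Moser--Trudinger inequality on the closed surface $\widetilde M$, namely that there is a constant $C$ so that $\ln\int_{\widetilde M}e^{w}\le \frac{1}{16\pi}\int_{\widetilde M}|\nabla w|^2 + \frac{1}{|\widetilde M|}\int_{\widetilde M}w + C$ for all $w\in H^{1,2}(\widetilde M)$ (after the volume normalization $\int_M\vol_g=1$ this passes to $\widetilde M$ with the induced weighted measure). Substituting $w=\frac{2v}{n}$, the gradient term contributes $\frac{1}{16\pi}\cdot\frac{4}{n^2}\int|\nabla v|^2 = \frac{1}{4\pi n^2}\int|\nabla v|^2$. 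Multiplying by $\frac n2 C(M,J,[g])$ and recalling $C(M,J,[g])=\int_M s^C\vol_g$, the coefficient of $\int|\nabla v|^2$ appearing with a minus sign in $-\frac n2 C(M,J,[g])\ln\int e^{2v/n}$ is $\frac{C(M,J,[g])}{8\pi n}$. Hence
\[
\mathcal{E}(u)\ \ge\ \Bigl(\tfrac12 - \tfrac{C(M,J,[g])}{8\pi n^2}\Bigr)\int_{\widetilde M}|\nabla v|^2\ +\ (\text{lower-order linear and bounded terms}),
\]
and the hypothesis $C(M,J,[g])<2\pi n^2$ makes the leading coefficient $\frac12-\frac{C(M,J,[g])}{8\pi n^2}$ strictly positive. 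This gives coercivity of $\mathcal{E}$ on $H^1_G(M)$ modulo constants (the functional is invariant under $u\mapsto u+\text{const}$, so one works on the subspace $\int_M u\,\vol_g=0$, controlling the linear term $\int s^C v$ by Cauchy--Schwarz and Poincaré against the gradient norm).

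The third step is the direct method: $\mathcal{E}$ is coercive and weakly lower semicontinuous on the Hilbert space $H^1_G(M)$ (the gradient term is weakly l.s.c., the linear term is weakly continuous, and the exponential term is weakly continuous by the compactness of the Moser--Trudinger embedding $H^{1,2}(\widetilde M)\hookrightarrow e^{L}$ on the compact surface $\widetilde M$). Therefore a minimizer $u\in H^1_G(M)$ exists. By the principle of symmetric criticality, a $G$-invariant critical point of $\mathcal{E}|_{H^1_G}$ is a genuine critical point of $\mathcal{E}$ on $H^{1,2}(M)$, hence a weak solution of the Euler--Lagrange equation $\Delta u + s^C = \frac{\int_M s^C\vol_g}{\int_M e^{2u/n}\vol_g}e^{2u/n}$; elliptic bootstrapping (the right-hand side lies in every $L^p$ since $u\in L^\infty$ by Moser--Trudinger and then $u\in C^\infty$) gives smoothness. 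By the conformal change formula~(\ref{conf_change}) with $\theta=0$, the metric $\tilde g=e^{2u/n}g$ then has constant Chern scalar curvature equal to $\int_M s^C\vol_g = C(M,J,[g])$, and $u$ is $G$-invariant so $\tilde g$ is $G$-invariant.

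The main obstacle is the sharp-constant bookkeeping in Step 2: one must make sure the Moser--Trudinger constant on $\widetilde M$, combined with the factor $\frac{2}{n}$ in the exponent and the factor $\frac n2$ in front of the logarithm, produces exactly the threshold $2\pi n^2$ — this is where the hypothesis is used and there is no slack. A secondary technical point is justifying that the pushforward measure on $\widetilde M$ (the quotient of $\vol_g$ along the $G$-fibers) is a smooth positive measure with total mass $1$, so that the standard Moser--Trudinger inequality on the closed surface $\widetilde M$ applies verbatim; this follows from $G$ acting freely by isometries, so the fibration $M\to\widetilde M$ is a smooth fiber bundle with compact fibers of locally constant volume.
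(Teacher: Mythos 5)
Your proposal follows the same route as the paper's proof: reduce to the quotient surface $\widetilde M$ via $G$-invariance, apply the Moser--Trudinger (Beckner) inequality there, deduce coercivity of $\mathcal{E}$ on the slice $\int_M u\operatorname{vol}_g=0$, and minimize by the direct method. You in fact supply several steps the paper leaves implicit (weak lower semicontinuity of the exponential term, symmetric criticality, elliptic bootstrapping, smoothness and positivity of the pushforward measure on $\widetilde M$), and these are all correct.

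The one point to settle is exactly the one you flag as the main obstacle: the constant. Your prose correctly computes the gradient coefficient contributed by the logarithmic term as $\frac{n}{2}\,C(M,J,[g])\cdot\frac{1}{4\pi n^2}=\frac{C(M,J,[g])}{8\pi n}$, but your displayed inequality then carries $\frac{C(M,J,[g])}{8\pi n^2}$; these disagree, and neither coincides with the coefficient $\frac{C(M,J,[g])}{4\pi n^2}$ used in the paper's proof, which is the one consistent with the stated threshold $2\pi n^2$ and which corresponds to \emph{not} multiplying by the factor $\frac{n}{2}$ in front of the logarithm. With your coefficient $\frac{C(M,J,[g])}{8\pi n}$ (the correct one for the functional as written) the coercivity threshold is $4\pi n$, which for $n\ge 3$ is strictly smaller than $2\pi n^2$, so the hypothesis as stated would not close the argument; your remark that there is ``no slack'' is therefore well founded. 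You should either locate the missing factor of $\frac{n}{2}$ (e.g.\ in the normalization of the lifted Beckner inequality or of the functional) or record that the threshold your argument actually yields is $4\pi n$.
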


\begin{proof}
The proof is classical and similar to the 2-dimensional case for the Riemannian scalar curvature. Indeed, we recall first the Beckner's Inequality. Namely, if $(\Sigma,h)$ is a Riemann surface with a Riemannian metric $h$ and $u\in H^{1,2}(\Sigma)$ then there exists a constant $C_{\Sigma,h}$ such that
$$\ln\left(\int_{\Sigma}e^{u-\int_\Sigma u\operatorname{vol}_h}\operatorname{vol}_h\right)\leq \frac{1}{16\pi}\int_{\Sigma}|\nabla u|^{2}\operatorname{vol}_h +C_{\Sigma,h}.$$
By $G$-invariance, this previous inequality can be lifted to $H_{G}^{1}(M)$. Now, assume that $\int_M\operatorname{vol}_g=1$ then for every $c\in \mathbb{R}$ we have $$\mathcal{E}(u+c)=\mathcal{E}(u).$$
Thus withour loss of generality we can assume that $\int_M{u}\operatorname{vol}_g=0$. We have then
\begin{align}
\mathcal{E}(u)&\geq\left(\frac{1}{2}-\frac{\int_M s^C\operatorname{vol}_g}{4n^{2}\pi}\right) \int_{M}|\nabla u|^{2}\operatorname{vol}_g+\int_{M}s^Cu\operatorname{vol}_g -C,\notag\\
&\geq \left(\frac{1}{2}-\frac{\int_M s^C\operatorname{vol}_g}{4n^{2}\pi}-\varepsilon\right) \int_{M}|\nabla u|^{2}\operatorname{vol}_g-C,\notag
\end{align}
for some constant $C$. So if $\int_M s^C\operatorname{vol}_g<2\pi n^{2}$, the functional is coercive and the minimization process follows to yield a minimum to the functional.
\end{proof}

\bibliographystyle{abbrv}

\bibliography{bibliography}

\end{document}